\let\oldlabel=\label
\def\prellabel{\marginparsep=1em\marginparwidth=44pt
    \def\label##1{\oldlabel{##1}\ifmmode\else\ifinner\else
         \marginpar{{\footnotesize\ \\ \tt
                    ##1}}\fi\fi}}
\theoremstyle{plain}
\newtheorem*{mainthm}{Main Theorem}
\newtheorem{thm}{Theorem}[section]
\newtheorem{prop}[thm]{Proposition}
\newtheorem{cor}[thm]{Corollary}
\newtheorem{lemma}[thm]{Lemma}
\theoremstyle{definition}
\newtheorem{defn}[thm]{Definition}
\newtheorem{rmk}[thm]{Remark}
\newtheorem{rmks}[thm]{Remarks}
\newcommand{\NN}{{\mathbb N}}
\newcommand{\PP}{{\mathbb P}}
\newcommand{\QQ}{{\mathbb Q}}
\newcommand{\ZZ}{{\mathbb Z}}
\DeclareMathOperator{\CS}{CS}
\DeclareMathOperator{\HS}{HS}
\DeclareMathOperator{\GL}{GL}
\DeclareMathOperator{\gin}{gin}
\DeclareMathOperator{\inid}{in}
\DeclareMathOperator{\pol}{pol}
\DeclareMathOperator{\Ker}{Ker}
\DeclareMathOperator{\projdim}{projdim}
\DeclareMathOperator{\reg}{reg}
\title{Universal Gr\"obner bases and Cartwright-Sturmfels ideals}
\author{A. Conca}
\address{Dipartimento di Matematica, 
Universit\`a di Genova, Via Dodecaneso 35, 
I-16146 Genova, Italy}
\email{conca@dima.unige.it}
\author{E. De Negri}
\address{Dipartimento di Matematica, 
Universit\`a di Genova, Via Dodecaneso 35, 
I-16146 Genova, Italy}
\email{denegri@dima.unige.it}
\author{E. Gorla}
\address{Institut de Math\'ematiques, Universit\'e de Neuch\^atel, Rue Emile-Argand 11, CH-2000
  Neuch\^atel, Switzerland}  
\email{elisa.gorla@unine.ch}
\thanks{The first and second authors were partially supported by INdAM-GNSAGA. The third author was partially supported by the Swiss National Science Foundation under grant no. 200021\_150207.}
\subjclass[2010]{Primary 13C40, 14M12, 13P10}
\begin{document}

\begin{abstract}
We describe the universal Gr\"obner basis of the ideal of maximal minors and the ideal of $2$-minors of a multigraded matrix of linear forms. Our results imply that the ideals are radical and provide bounds on the regularity. In particular, the ideals of maximal minors have linear resolutions. Our main theoretical contribution consists of introducing two new classes of ideals named after Cartwright and Sturmfels, and proving that they are closed under multigraded hyperplane sections. The gins of the ideals that we study enjoy special properties.
\end{abstract}

\maketitle

\section*{Introduction}

In this paper we study the universal Gr\"obner bases of the ideals of maximal minors and the ideals of $2$-minors of multigraded matrices of linear forms.
Our results might be seen as the generalization of the Bernstein-Sturmfels-Zelevinsky Theorem \cite{BZ,SZ} (asserting that maximal minors of a matrix of variables form a universal Gr\"obner basis) and of the result of Sturmfels \cite{S} and Villarreal \cite{V} (asserting that the cycles of the complete bipartite graph give rise to the universal Gr\"obner basis of the ideal of $2$-minors of the matrix of variables). Intermediate results in this direction have been obtained in \cite{AST,B,CS,C,CDG,K}.

In order to study universal Gr\"obner bases, we introduce two families of multigraded ideals, which we call Cartwright-Sturmfels and Cartwright-Sturmfels$^*$. Both families are characterized by properties of their multigraded generic initial ideals: Cartwright-Sturmfels ideals have radical generic initial ideals, while the generic initial ideal of a Cartwright-Sturmfels$^*$ ideal has a system of generators that involves only one variable per multidegree. We prove that the generic initial ideal of a Cartwright-Sturmfels or Cartwright-Sturmfels$^*$ ideal is independent of the choice of the term order. 

It turns out that Cartwright-Sturmfels ideals are radical, and that every multigraded minimal system of generators of a Cartwright-Sturmfels$^*$ ideal is a universal Gr\"obner basis. In addition, the Castelnuovo-Mumford regularity of $\ZZ^v$-graded Cartwright-Sturmfels ideals and the projective dimension of $\ZZ^v$-graded Cartwright-Sturmfels$^*$ ideals are bounded by $v$. We also prove that squarefree monomial ideals in the two families are Alexander-dual to each other, and that the generic initial ideals of Alexander duals are obtained from each other via Alexander duality and polarization. 

Finally, we prove that the family of Cartwright-Sturmfels ideals and the family of Cartwright-Sturmfels$^*$ ideals are closed with respect to several natural operations on ideals. For example, both families are closed with respect to going modulo $L$ and to taking the colon by $L$, for any multigraded linear form $L$. As a consequence of our results, we obtain that
the ideals of maximal minors and the ideals of $2$-minors of multigraded matrices of linear forms are Cartwright-Sturmfels. Moreover, the ideals of maximal minors of a column-graded matrix of multigraded linear forms is Cartwright-Sturmfels$^*$ as well. This allows us to derive the desired results about universal Gr\"obner bases.

\bigskip

Let $S$ be  a polynomial ring  over a field $K$ with a standard  $\ZZ^v$-graded structure. By this we mean that the degree of every indeterminate  of $S$ is an element of the canonical basis $\{e_1,\dots,e_v\}$ of $\ZZ^v$. 
Let $A=(a_{ij})$ be a $m\times n$ matrix with entries in $S$ and denote by $I_t(A)$ the ideal of $t$-minors of $A$. We say that the matrix $A$ is column-graded if  $n\leq v$ and $a_{ij}=0$ or $\deg(a_{ij})=e_j\in \ZZ^v$ for every $i,j$. We say that $A$ is row-graded if  $m\leq v$ and $a_{ij}=0$ or $\deg(a_{ij})=e_i\in \ZZ^v$ for every $i,j$.
 
\begin{mainthm}
Assume $A$ is column graded  or row graded of size $m\times n$ with $m\leq n$. Then:  
\begin{enumerate}
\item $I_m(A)$ is radical and has a linear resolution. Moreover, every initial ideal of $I_m(A)$  is radical and has a linear resolution. 
 \item  In the column graded case the maximal minors of $A$ form a universal Gr\"obner basis  of $I_m(A)$.  In the row graded case $I_m(A)$ has  a universal Gr\"obner basis  of  elements of degree equal to $\sum_{i=1}^m e_i  \in \ZZ^m$.
\item $I_2(A)$ is radical. Moreover, every initial ideal of $I_2(A)$   is radical. 
\item  In the column graded case  $I_2(A)$ has  a universal Gr\"obner basis  of elements of degree $\leq \sum_{i=1}^n e_i$.  In the row graded case  $I_2(A)$ has  a universal Gr\"obner basis  of elements of degree $\leq \sum_{i=1}^m e_i$. \end{enumerate} 
\end{mainthm}
 


This corresponds to the following geometric picture: for $i=1,\ldots,v$ fix positive integers $n_i$ and let $x_i=(x_{i1},\ldots,x_{in_v})^t$ be column vectors whose entries are distinct indeterminates. Let $A_i\in K^{m\times n_i}$ be matrices with entries in $K$, let $A$ be the matrix whose columns are $A_1x_1,\ldots,A_vx_v$. Then $A$ is a column graded matrix of size $m\times v$, and it defines a map $$\varphi:\PP^{n_1-1}\times\ldots\times\PP^{n_v-1}\longrightarrow(\PP^{m-1})^v.$$
The ideal $I_2(A)$ defines the inverse image via $\varphi$ of the diagonal $\{(P,\ldots,P) \ : \ P\in\PP^{m-1}\}\subset(\PP^{m-1})^v$, while $I_m(A)$ defines the locus of points of $\PP^{n_1-1}\times\ldots\times\PP^{n_v-1}$ whose images lie on a common hyperplane $H\subset\PP^{m-1}$.
A row graded $A$ of size $m\times n$ can be constructed similarly, and has a similar geometric interpretation.

 \section{Cartwright-Sturmfels ideals}
 
Given $v\in \NN$,  let $S$ be a polynomial ring over an  infinite field $K$ with a standard  $\ZZ^v$-graded structure. 
As said above, by this we mean that the degree of every indeterminate of $S$ is an element of the canonical basis $\{e_1,\dots,e_v\}$ of $\ZZ^v$. 
For $i=1,\dots,v$ let $n_i$ be the number of indeterminates of $S$ of degree $e_i$.  
We denote them by $x_{i1},\dots, x_{in_i}$. We assume that $n_i>0$ for all $i$. 
 
For a $\ZZ^v$-graded $S$-module $M$ we denote by $M_a$ the homogeneous component of $M$ of degree $a\in \ZZ^v$. 

\begin{defn}
Let $M$ be a finitely generated, $\ZZ^v$-graded $S$-module. 
The {\em $\ZZ^v$-graded Hilbert series} of $M$ is
 $$\HS(M,y_1,\dots,y_v)=\sum_{a\in \ZZ^v} (\dim M_a)  y^a\in
 \QQ[[y_1,\dots,y_v]][y_1^{-1},\dots, y_v^{-1}].$$ 
 If $v$ is clear from the context, we simply denote it by $\HS(M,y)$. 
\end{defn} 

The group $G=\GL_{n_1}(K)\times \cdots \times \GL_{n_v}(K)$ acts on
$S$ as the  group of $\ZZ^v$-graded $K$-algebras automorphisms.  Let
$B=B_{n_1}(K)\times \cdots  \times B_{n_v}(K)$ be the {\em Borel
  subgroup} of $G$, consisting of the upper triangular invertible
matrices. 
\begin{defn}
An ideal $I\subset S$ is {\em Borel fixed} if $g(I)=I$ for every $g\in
B$. 
\end{defn}

Borel fixed ideals are monomial ideals that can be characterized by
means of exchange properties, as explained in \cite[Thm. 15.23]{E} for
standard $\ZZ$-graded polynomial rings and in  \cite[Sect.1]{ACD} in the standard $\ZZ^2$-graded case. 
Explicitly,  an ideal  $I$ of $S$  is Borel fixed with respect to the $\ZZ^v$-graded structure if it is generated by monomials and for every monomial generator 
$u$ of $I$ and every variable $x_{ij}$ appearing in $u$ with exponent, say, $c$ one has that $(x_{ik}/x_{ij})^du\in I$ for every $k<j$ and every 
$0\leq d\leq c$ such that $\binom{c}{d}\neq 0$ in the field  $K$. 

\begin{rmk} 
Unless otherwise stated,  we consider only term orders such that $x_{ij}>x_{ik}$ if $1\leq j<k\leq n_i$. 
\end{rmk} 

Given a term order $\tau$ and a $\ZZ^v$-graded homogeneous ideal $I$ of $S$, one can consider its {\em $\ZZ^v$-graded generic initial ideal} $\gin_{\tau}(I):=\inid_\tau(g(I))$, where $g$ is a general element in $G$. As in the $\ZZ$-graded setting, $\ZZ^v$-graded generic initial ideals are Borel fixed and can be obtained as $\inid_{\tau}(b(I))$ for a general $b\in B$. 

Inspired by the work of  Cartwright and Sturmfels \cite{CS} we introduce two classes of ideals that will play an important role in the proof of the Main Theorem.  

Let  $T=K[x_{11},x_{21},\dots, x_{v1}]\subset S$ with the $\ZZ^v$-graded structure induced by that of $S$. Note that a $\ZZ^v$-graded homogeneous ideal of $T$ is nothing but a monomial ideal of $T$.  Similarly, a $\ZZ^v$-graded ideal of $S$ which is extended from $T$ is an ideal of $S$ which is generated by monomials in the variables $x_{11},x_{21},\dots, x_{v1}$.

\begin{defn}
Let $I$ be a $\ZZ^v$-graded ideal of $S$. We say that $I$ is a Cartwright-Sturmfels ideal if there exists a radical Borel fixed ideal $J$ of $S$ such that $\HS(I,y)=\HS(J,y)$. We say that $I$ is a Cartwright-Sturmfels$^*$ ideal if there exists a $\ZZ^v$-graded ideal $J$ of $S$ extended from $T$ such that $\HS(I,y)=\HS(J,y)$. 
\end{defn}

The set of Cartwright-Sturmfels ideals of $S$ is denoted by $\CS(S)$, or simply by $\CS$ when $S$ is clear from the context. Similarly $\CS^*(S)$, or simply 
$\CS^*$, denotes the set of the Cartwright-Sturmfels$^*$ ideals of $S$. 

The classes $\CS$ and $\CS^*$ are in a sense dual to each other. In fact, in Theorem \ref{dual} we show that if $I$ is a squarefree monomial ideal, then $I$ belongs to $\CS$ if and only if its Alexander dual $I^*$ belongs to $\CS^*$.

\begin{rmks}\label{easy}
It follows from the definitions that the families $\CS$ and $\CS^*$ are closed under $\ZZ^v$-graded coordinate changes, i.e. under the action of the group $G$. Moreover, $I\in\CS$ (respectively in $\CS^*$) if and only if $\inid_{\tau}(I)\in \CS$ (respectively in $\CS^*$), for some term order $\tau$.
\end{rmks} 

The next proposition was already proved in~\cite{CDG}.

\begin{prop} 
Let $I\in \CS$ and let $J$ be a radical Borel fixed ideal such that $\HS(I,y)=\HS(J,y)$. Then for every term order $\tau$ one has $\gin_\tau(I)=J$. 
In particular, $I$ is radical and generated by elements of degree $\leq \sum_{i=1}^v e_i$. Moreover one has the following characterization of 
Cartwright-Sturmfels ideals: 
$$\CS=\{ I : I \mbox{ is }  \ZZ^v\mbox{-graded and $\gin_{\tau}(I)$ is radical for some (equivalently all) term order $\tau$}\}.$$
\end{prop}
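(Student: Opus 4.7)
The key claim to establish is that $\gin_\tau(I)=J$ for every term order $\tau$; the remaining assertions then follow formally.

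First, $\gin_\tau(I)$ is a $\ZZ^v$-graded Borel-fixed monomial ideal, and $\HS(\gin_\tau(I),y)=\HS(I,y)=\HS(J,y)$, since passing to an initial ideal preserves multigraded Hilbert series and the group $G$ acts by $\ZZ^v$-graded automorphisms. The proposition thus reduces to the following uniqueness statement, which is the content of the cited result of \cite{CDG}:

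\smallskip
\emph{If $J$ is a radical $\ZZ^v$-graded Borel-fixed ideal and $J'$ is any $\ZZ^v$-graded Borel-fixed ideal with $\HS(J',y)=\HS(J,y)$, then $J'=J$.}
\smallskip

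A key input to this uniqueness is that every minimal generator of a radical $\ZZ^v$-graded Borel-fixed ideal $J$ has multidegree $\leq \sum_{i=1}^v e_i$. Indeed, if a minimal (squarefree) generator $u$ of $J$ contained both $x_{ij_1}$ and $x_{ij_2}$ in the same block $i$ with $j_1<j_2$, the Borel exchange relation would put $x_{ij_1}^2\,(u/x_{ij_1}x_{ij_2})$ in $J$, and radicality would then put $x_{ij_1}\,(u/x_{ij_1}x_{ij_2})$ in $J$, contradicting the minimality of $u$. Hence $J$ is generated in squarefree multidegrees $a\leq \sum_i e_i$, and one compares $J$ with $J'$ multidegree by multidegree, using the exchange relations of Borel-fixedness to propagate agreement from the squarefree components (constrained by the Hilbert data) to all of $\ZZ^v$.

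Granting $\gin_\tau(I)=J$, the remaining statements are formal. (a) Radicality of $I$ follows from the standard chain $\inid_\tau(\sqrt{I})\subseteq \sqrt{\inid_\tau(I)}=\inid_\tau(I)\subseteq \inid_\tau(\sqrt{I})$, which forces $\HS(I,y)=\HS(\sqrt{I},y)$ and hence $I=\sqrt{I}$. (b) The bound on the multidegrees of a minimal multigraded generating set of $I$ is inherited from the same bound for $\gin_\tau(I)=J$ via the standard upper bound on multigraded Betti numbers under Gr\"obner degeneration. (c) The characterization of $\CS$ is immediate: the inclusion $\subseteq$ is exactly the statement that $\gin_\tau(I)=J$ is radical, while for $\supseteq$, if $\gin_\tau(I)$ is radical for some term order, one takes this as the witnessing $J$.

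\textbf{Main obstacle.} The substance lies in the uniqueness statement. For a non-squarefree multidegree $a$, a monomial $B$-fixed subspace of $S_a$ is not determined by its dimension, so the multigraded Hilbert series alone does not pin down the components $J'_a$. One must exploit the fact that $J$ is generated in squarefree multidegrees, together with the interaction of the exchange relations with multiplication by variables, to force $J'_a=J_a$ in the non-squarefree multidegrees as well.
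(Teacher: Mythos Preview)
Your proposal is correct and follows the same route as the paper: both arguments reduce the main claim to the uniqueness theorem of \cite[Theorem~2.5]{CDG} (a radical Borel-fixed ideal is the unique Borel-fixed ideal with its $\ZZ^v$-graded Hilbert series), applied to $J$ and $\gin_\tau(I)$, and then treat the remaining assertions as formal consequences. Your write-up is simply more explicit than the paper's---you sketch the idea behind the cited uniqueness result and spell out the radicality, degree-bound, and characterization arguments that the paper summarizes as ``an easy consequence.''
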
 
\begin{proof} The first statement follows from  \cite[Theorem~2.5]{CDG}  applied to $J$ and $\gin_\tau(I)$. The rest is an easy consequence. 
\end{proof}   

\begin{prop}\label{aru1}
Let $S'=S[z_1,\dots,z_u]$ be a polynomial extension of $S$ with a standard $\ZZ^v$-graded structure extending that of $S$.  
Let $I$ be a $\ZZ^v$-graded ideal of $S$. The following are equivalent: 
\begin{itemize}
\item[(1)] $I\in \CS(S)$, 
\item[(2)] $IS'\in  \CS(S')$, 
\item[(3)] $IS'+(z_1,\dots,z_u)\in  \CS(S')$.
\end{itemize} 
\end{prop}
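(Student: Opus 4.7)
The plan is to use the characterization from the previous Proposition: $I \in \CS$ if and only if there exists a radical Borel-fixed ideal with the same Hilbert series as $I$ (equivalently, $\gin_\tau(I)$ is radical for some, hence every, term order $\tau$). I would extend the term order on $S$ to $S'$ by declaring each new variable $z_j$ to be smaller than every $x$-variable of its multidegree. With this ordering, any $b \in B'$ sends $x_{ij}$ to a $K$-linear combination of $x_{i1}, \dots, x_{ij}$ (no $z$'s appear), so $b|_S \in B$, and generic $b \in B'$ restricts to generic $b|_S \in B$.

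For $(1) \Leftrightarrow (2)$, I would establish the identity $\gin_\tau(IS') = \gin_\tau(I) \cdot S'$. For generic $b \in B'$ we have $b(IS') = b|_S(I) \cdot S'$, and a reduced Gr\"obner basis of $b|_S(I)$ in $S$ is also a Gr\"obner basis of its extension to $S'$ (the term orders are compatible). Taking initial ideals gives $\inid_\tau(b(IS')) = \inid_\tau(b|_S(I)) \cdot S' = \gin_\tau(I) \cdot S'$. A monomial ideal is radical if and only if its extension to a polynomial ring is radical, so $\gin_\tau(I)$ is radical iff $\gin_\tau(IS')$ is.

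For $(1) \Leftrightarrow (3)$, I would use a positional shift. Let $K := (x_{i1}, \dots, x_{i, u_i} : i = 1, \dots, v) \subset S'$, and write $z^{(i)}_1, \dots, z^{(i)}_{u_i}$ for the $z_k$ of degree $e_i$. Define $\sigma: S \to S'$ on variables by $\sigma(x_{ij}) = x_{i, j+u_i}$ if $j+u_i \le n_i$ and $\sigma(x_{ij}) = z^{(i)}_{j+u_i-n_i}$ otherwise; then $\sigma$ identifies $S$ with $S'' := S'/K$, sending the variable at position $j$ in $S$ to the one at position $j + u_i$ in $S'$. The correspondence $J \mapsto \sigma(J) \cdot S' + K$ gives a bijection between radical Borel-fixed witnesses for $I$ in $S$ and those for $IS' + (z_1, \dots, z_u)$ in $S'$. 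For the forward direction, $J' := \sigma(J) \cdot S' + K$ is squarefree, and for a Borel exchange on a generator $\sigma(u)$ (with $u \in J$) replacing $\sigma(x_{ij})$ (at position $j+u_i$ in $S'$) by the variable at some position $p < j+u_i$, either $p \le u_i$ and the result lies in $(x_{ip}) \subset K$, or $p > u_i$ and the replacement variable is $\sigma(x_{i, p-u_i})$, giving the $\sigma$-image of a Borel-exchanged monomial in $J$; hence $J'$ is Borel-fixed. The Hilbert series match because $K \subseteq J'$ yields $S'/J' \cong S''/\sigma(J) \cong S/J$. For the reverse direction, any witness $\tilde J'$ satisfies $\dim_K \tilde J'_{e_i} = \dim_K (IS' + (z_1, \dots, z_u))_{e_i} = \dim_K I_{e_i} + u_i \ge u_i$, and Borel-fixedness forces $\tilde J'_{e_i}$ to contain the first $u_i$ variables $x_{i1}, \dots, x_{i, u_i}$, so $\tilde J' \supseteq K$; then $\sigma^{-1}(\tilde J'/K) \subset S$ is a radical Borel-fixed witness for $I$ by a symmetric Borel-fixedness check, since Borel exchanges in $S''$ correspond to Borel exchanges in $S'$ among the positions $> u_i$ and so preserve $\tilde J'$.

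The main obstacle is the positional bookkeeping in the $(1) \Leftrightarrow (3)$ correspondence: every Borel exchange in $S'$ on a generator of $J'$ must be tracked through $\sigma$ as either introducing a variable from the padding ideal $K$ or mirroring a Borel exchange in $J$, and the mirror-image check must be carried out for $\hat J := \tilde J'/K$ in the reverse direction.
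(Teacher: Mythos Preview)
Your argument is correct. The $(1)\Leftrightarrow(2)$ part is identical to the paper's: both establish $\gin_\sigma(IS')=\gin_\tau(I)\,S'$ via the compatibility of the Borel group and term order with the extension.

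For the remaining equivalence, the paper takes a shorter and more conceptual route than your positional-shift argument. Instead of building a bijection between witnesses via the map $\sigma$, the paper proves $(2)\Leftrightarrow(3)$ directly from the observation that $z_1,\dots,z_u$ is a regular sequence on both $S'/IS'$ and $S'/\gin_\tau(I)S'$, so
\[
\HS\bigl(IS'+(z),y\bigr)=\HS\bigl(\gin_\tau(I)S'+(z),y\bigr).
\]
Then, rather than keeping the $z$'s last and shifting through $\sigma$, the paper simply \emph{reorders} the variables so that the $z_i$'s are the first in their multidegrees; with that ordering, $\gin_\tau(I)S'+(z)$ is immediately radical and Borel fixed, giving $(2)\Rightarrow(3)$. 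For $(3)\Rightarrow(2)$, since $\CS$ depends only on the Hilbert series, $\gin_\tau(I)S'+(z)$ is itself in $\CS(S')$, hence radical, hence $\gin_\tau(I)$ is radical. Your shift $\sigma$ is, in effect, exactly this variable reordering written out as a ring map, so the two arguments are morally the same; the paper just avoids the combinatorial bookkeeping of tracking Borel exchanges through $\sigma$. One small notational point: your definition $K=(x_{i1},\dots,x_{i,u_i})$ tacitly assumes $u_i\le n_i$; in general $K$ should be the ideal generated by the first $u_i$ variables of degree $e_i$ in the chosen ordering, which may include some of the $z$'s.
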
 

\begin{proof} 
Let $\tau$ be a term order on $S$, let $\sigma$ be a term order on $S'$ which extends $\tau$ and such that 
$x>_\sigma z_i$ for every $i$ and every variable of $x$ of $S$. One first observes that 
$$\gin_\tau(I)S'=\gin_\sigma(IS'),$$
since the generic initial ideal can be obtained as $\inid(b(I))$, for a general element $b\in B$. 
The equality implies immediately that (1) and (2) are equivalent.  

To prove that (2) and  (3) are equivalent, one first notices that  
$$\HS(IS'+(z),y)=\HS(\gin_\tau(I)S'+(z),y)$$
because $z=\{z_1,\dots,z_u\}$ is a regular sequence mod $IS'$  and mod $\gin_\tau(I)S'$. 

Assuming (2) we have that $\gin_\tau(I)$ is radical and Borel fixed, hence $\gin_\tau(I)S'+(z)$ is radical and Borel fixed provided that we order the variables of $S'$ so that the $z_i$'s are the first in their multidegrees.
Hence $IS'+(z)$ has the Hilbert series of a radical and Borel fixed ideal, so $IS'+(z)\in \CS(S')$. 
 
Assuming (3) we know that all the ideals with the Hilbert series of $IS'+(z)$ are in $\CS(S')$ and in particular are radical. This implies that $\gin_\tau(I)S'+(z)$ is radical, hence $\gin_\tau(I)$ is radical. 
 \end{proof} 
 
 
\begin{rmk} Let $S=K[x_{ij} : 1\leq i\leq  m \mbox{ and } 1\leq j \leq  n]$ with the grading induced by $\deg(x_{ij})=e_i\in \ZZ^m$. Denote by $I_2$ the ideal of $2$-minors of $(x_{ij})$. 
In \cite{CS}  Cartwright and Sturmfels proved that every $\ZZ^m$-graded ideal $J$ with Hilbert series equal to that of $I_2$ is radical. Their argument relies on the fact that $\gin(I_2)$ is radical, a result proved by Conca in \cite{C}. This circle of  ideas lead us to consider the family of multigraded ideals with a radical gin, and to name them after Cartwright and Sturmfels.
\end{rmk}

We now turn our attention to the ideals in $\CS^*$. We start by establishing some properties, which will be essential in the sequel.

\begin{prop}
\label{arus}
Let $J$ be a $\ZZ^v$-graded ideal such that $J\in \CS^*(S)$ and let $C=( x_{11}^{a_1}\cdots x_{v1}^{a_v} : a\in \ZZ^v \mbox{ and } J_a\neq 0)$. Then: 
 \begin{enumerate}
\item $C=\gin_\tau(J)$ for every term order $\tau$ and $C$ is the only Borel fixed ideal with the same Hilbert series as $J$. 
\item For general $\lambda_{ij}\in K$ the sequence $\Lambda=\{x_{ij}-\lambda_{ij}x_{i1}\mid 1\leq i\leq v,\; 2\leq j\leq n_i\}$ is   $S/J$-regular   and $S/C$-regular. 
\item The ideals $J$ and $C$ have the same $\ZZ^v$-graded Betti numbers. In particular they have the same $\ZZ$-graded Betti numbers and 
$$\projdim(S/J)\leq v.$$
\item The multidegrees of any minimal system of $\ZZ^v$-homogeneous generators of $J$ are incomparable in $\ZZ^v$. 
\end{enumerate} 
\end{prop}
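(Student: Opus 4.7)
My plan is to prove the four parts in sequence, each building on the previous; the main obstacle will be part (2) for $S/J$, where the semicontinuity transfer of regularity requires some care.

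For (1), I would first note that $C$ is Borel fixed automatically, since its generators use only the variables $x_{i1}$ and the exchange condition is therefore vacuous. The key structural observation is that any Borel fixed $\ZZ^v$-graded ideal $I$ with $I_a \neq 0$ must contain the ``concentrated'' monomial $x_{11}^{a_1}\cdots x_{v1}^{a_v}$: given a monomial $m \in I$ of multidegree $a$, iteratively applying the exchange $(x_{i1}/x_{ij})^c$ (with $\binom{c}{c}=1$ trivially nonzero) moves every exponent to the first variable in each block. Let $J_T \subset T$ be the monomial ideal with $\HS(J_T S, y) = \HS(J, y)$ furnished by the $\CS^*$ hypothesis; a direct count of monomial bases in $S$ shows $J_a \neq 0$ iff $x_{11}^{a_1} \cdots x_{v1}^{a_v} \in J_T$, so $C = J_T S$ and in particular $\HS(C,y) = \HS(J,y)$. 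Since $\gin_\tau(J)$ is Borel fixed with the same Hilbert series, the observation forces $C \subseteq \gin_\tau(J)$, and matching Hilbert series yields $C = \gin_\tau(J)$ for every term order $\tau$; the same argument proves uniqueness of $C$ among Borel fixed ideals with this Hilbert series.

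For (2), the case of $S/C$ is immediate after the substitution $y'_{ij} := x_{ij} - \lambda_{ij} x_{i1}$, which presents $S$ as the polynomial ring $T[y'_{ij}]$, so $S/C \cong (T/C_T)[y'_{ij}]$ and the $y'_{ij}$ are trivially regular. For $S/J$ I would invoke the flat Gr\"obner degeneration of a generic conjugate $g(J)$ to $\inid_\tau(g(J)) = C$: upper semicontinuity of Hilbert functions yields $\HS(S/(g(J)+\Lambda)) \leq \HS(S/(C+\Lambda))$, while the Koszul lower bound gives $\HS(S/(g(J)+\Lambda)) \geq \HS(S/g(J)) \prod_i (1-y_i)^{n_i-1} = \HS(S/(C+\Lambda))$; equality forces $\Lambda$ regular on $S/g(J)$. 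Since regularity of a sequence of linear forms depends only on its $K$-span, and every $g \in B$ maps a hyperplane of $S_{e_i}$ avoiding $x_{i1}$ to another such hyperplane, the span of $g^{-1}(\Lambda(\lambda))$ coincides with that of $\Lambda(\mu)$ for some $\mu = \mu(g,\lambda)$, and for fixed $g$ the map $\lambda\mapsto\mu$ is a polynomial bijection; hence $\Lambda(\mu)$ is $S/J$-regular for generic $\mu$.

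For (3), once $\Lambda$ is regular on both $S/J$ and $S/C$, the $\ZZ^v$-graded Betti numbers are preserved on passing to $T = S/(\Lambda)$, since the minimal free resolution tensored with the Koszul-acyclic $S/(\Lambda)$ remains minimal. The image of $C = C_TS$ in $T$ is exactly $C_T$; the image of $J$ is a $\ZZ^v$-graded ideal of $T$ with the same Hilbert series as $C_T$, and because $T$ has one variable per multidegree, every $\ZZ^v$-graded ideal of $T$ is monomial and uniquely determined by its Hilbert series, so the two images coincide. Hence $\beta_{i,a}^S(S/J) = \beta_{i,a}^T(T/C_T) = \beta_{i,a}^S(S/C)$ for all $i, a$, and $\projdim(S/J) \leq v$ follows from Hilbert's syzygy theorem in the $v$-variable polynomial ring $T$. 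For (4), the equality of $\beta_{0,a}$'s from (3) shows that $J$ and $C$ share the same minimal generator multidegrees; the minimal monomial generators of $C_T$ have pairwise incomparable exponent vectors (else one would divide another, contradicting minimality), and the claim follows.
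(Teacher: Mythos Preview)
Your arguments for (1), (3), and (4) are correct and match the paper's approach closely; in (3) you spell out more carefully why $J+(\Lambda)=C+(\Lambda)$, which the paper states ``by construction''.

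There is, however, a genuine gap in (2). The inequality $\HS(S/(g(J)+\Lambda(\lambda))) \leq \HS(S/(C+\Lambda(\lambda)))$ does \emph{not} follow from upper semicontinuity along the Gr\"obner degeneration for arbitrary $\lambda$. The Gr\"obner family has fibers $\phi_t(g(J))$ where $\phi_t$ rescales variables by powers of $t$; adding a fixed $\Lambda(\lambda)$ gives fibers isomorphic to $S/(g(J)+\phi_t^{-1}(\Lambda(\lambda)))$, and $\phi_t^{-1}(\Lambda(\lambda))$ does not have the same span as $\Lambda(\lambda)$ once $\lambda\neq 0$. Concretely, take $v=1$, $S=K[x_{11},x_{12}]$, $J=(x_{12})\in\CS^*$. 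For generic $g\in B$ one has $g(J)=(bx_{11}+cx_{12})$ with $b\neq 0$ and $C=\inid_\tau(g(J))=(x_{11})$. With $\lambda=-b/c$ the form $x_{12}-\lambda x_{11}$ is a scalar multiple of $bx_{11}+cx_{12}$, so $\HS(S/(g(J)+\Lambda(\lambda)))=1/(1-y)$ while $\HS(S/(C+\Lambda(\lambda)))=1$; the claimed inequality fails. Since your later step uses the bijection $\lambda\mapsto\mu$ for fixed $g$, you implicitly need the semicontinuity for \emph{all} $\lambda$, not just generic ones, so the argument as written would even conclude that $\Lambda(\mu)$ is $S/J$-regular for every $\mu$, which is false in this example.

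The fix is exactly what the paper does: take $\lambda=0$, i.e.\ work with the sequence of variables $\{x_{ij}:j\geq 2\}$. This span \emph{is} preserved by the torus rescaling $\phi_t$, so the semicontinuity inequality is valid and yields regularity of $\{x_{ij}:j\geq 2\}$ on $S/g(J)$. Then $g^{-1}(\{x_{ij}:j\geq 2\})$ is regular on $S/J$, and for generic $g\in G$ Gaussian elimination puts these linear forms in the shape $x_{ij}-\lambda_{ij}x_{i1}$ with generic $\lambda_{ij}$. Your subsequent reduction via the bijection $\lambda\mapsto\mu$ is then unnecessary.
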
 

\begin{proof} 
(1) By assumption there exists an ideal $C'$ generated by monomials in $x_{11},\dots, x_{v1}$ with $\HS(C',y)=\HS(J,y)$. 
We claim that $C=C'$. In fact, let $a\in \ZZ^v$ such that $u=x_{11}^{a_1}\cdots x_{v1}^{a_v}$  is  a monomial generator of $C'$. 
Then $0\neq \dim (C')_a=\dim J_a$, hence $u$ is in $C$. Conversely, let $a\in \ZZ^v$ such that $J_a\neq 0$. Hence $(C')_a\neq 0$, so there is a generator $x_{11}^{b_1}\dots x_{v1}^{b_v}$ of $C'$ such that $b\leq a$ coefficientwise. In particular, $x_{11}^{a_1}\cdots x_{v1}^{a_v}\in C'$. 
Next we show that $C=\gin_\tau(J)$. Since $\HS(C,y)=\HS(\gin_\tau(J),y)$, it suffices to show that $C\subseteq \gin_\tau(J)$.  If $a\in \ZZ^v$ is such that $C_a\neq 0$, then $\gin_\tau(J)_a\neq 0$. Since $\gin_\tau(J)$ is a Borel fixed ideal which contains a monomial of degree $a$, it contains $x_{11}^{a_1}\cdots x_{v1}^{a_v}$. The same argument shows that $C$ is the only Borel fixed ideal with the Hilbert series of $J$. 

(2) We know from (1) that $\{x_{ij}\mid 1\leq i\leq v,\; 2\leq j\leq n_i\}$ is a regular sequence modulo $\gin_{\tau}(J)$, hence also modulo $g(J)$ for a generic $g\in G$. Let  $L_{ij}=g^{-1}(x_{ij})$. By construction $\{L_{ij}\mid 1\leq i\leq v,\; 2\leq j\leq n_i\}$ is $S/J$-regular. Since $g$ is  generic, applying Gaussian elimination to the $L_{ij}$'s produces a system of generators of the same ideal of the form $x_{ij}-\lambda_{ij}x_{i1}$, for general $\lambda_{ij}$. Since the property of being a regular sequence just depends on the ideal that the elements  generate, it follows that the sequence $\Lambda$ is $S/J$-regular. The assertion for $C$ is obvious. 

Finally, (3) follows from (2) since by construction $J+(\Lambda)=C+(\Lambda)$, and (4) follows from (3) because the assertion is obviously true for $C$. 
\end{proof} 
 
The following characterization of Cartwright-Sturmfels* ideals is a simple consequence of Proposition~\ref{arus}.

\begin{cor}\label{easy2} We have: 
$$\CS^*=\{ I : I \mbox{ is }  \ZZ^v\mbox{-graded and $\gin_{\tau}(I)$ is extended from $T$ for some (equivalently all) $\tau$} \}.$$
\end{cor}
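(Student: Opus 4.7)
The proof is essentially a repackaging of Proposition~\ref{arus}(1) together with the definition of $\CS^*$. The plan is to verify both inclusions separately and simultaneously extract the ``some $\Leftrightarrow$ all'' clause.

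First I would prove the forward inclusion. Suppose $I\in\CS^*$. By Proposition~\ref{arus}(1), for every term order $\tau$ the generic initial ideal $\gin_\tau(I)$ equals the monomial ideal
$$C=(x_{11}^{a_1}\cdots x_{v1}^{a_v}\ :\ a\in\ZZ^v,\ I_a\neq 0),$$
which is generated by monomials in the variables $x_{11},\ldots,x_{v1}$ and is therefore extended from $T$. This shows that if $I\in\CS^*$, then $\gin_\tau(I)$ is extended from $T$ for \emph{every} term order $\tau$.

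For the reverse inclusion, suppose there exists some term order $\tau$ for which $\gin_\tau(I)$ is extended from $T$. Set $J:=\gin_\tau(I)$. Then $J$ is by hypothesis a $\ZZ^v$-graded ideal of $S$ extended from $T$, and standard properties of the generic initial ideal give $\HS(I,y)=\HS(J,y)$. By the definition of $\CS^*$, this means $I\in\CS^*$.

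Combining the two implications also establishes that ``for some $\tau$'' is equivalent to ``for all $\tau$'': the second implication turns the ``some'' hypothesis into $I\in\CS^*$, and the first implication then promotes this to ``all''. Since each step is a direct application of the definition or of Proposition~\ref{arus}(1), there is no real obstacle; the only subtle point to keep in mind is that $\gin_\tau(I)$, being a monomial ideal in $S$ whose generators lie in $T$, automatically qualifies as an ideal ``extended from $T$'' in the sense of the paper, so it can be used as the witness ideal $J$ in the definition of $\CS^*$.
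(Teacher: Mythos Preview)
Your argument is correct and is exactly the unpacking the paper has in mind: the corollary is stated as an immediate consequence of Proposition~\ref{arus}(1), and you have spelled out both inclusions using that result for the forward direction and the definition of $\CS^*$ (via the Hilbert series of $\gin_\tau(I)$) for the reverse.
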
 

Moreover we have: 
\begin{cor}\label{easy3}
Let $J\subset S$ be a monomial ideal. The following are equivalent:  
\begin{enumerate} 
\item $J\in \CS^*$, 
\item $\Gamma=\{x_{ij}-x_{i1}\mid 1\leq i\leq v,\;  2\leq j\leq n_i\}$ is a $S/J$-regular sequence.
\end{enumerate}
\end{cor}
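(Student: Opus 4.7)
The plan is to prove the two implications separately. For $(2) \Rightarrow (1)$, consider the $K$-algebra surjection $\phi \colon S \to T$ sending each $x_{ij}$ to $x_{i1}$, whose kernel is the ideal $(\Gamma)$. Let $C = \phi(J) \subseteq T$. Since every $\ZZ^v$-graded ideal of $T$ is monomial, $C$ is a monomial ideal, so its extension $CS$ is an ideal extended from $T$ in the sense of the paper. Both $J + (\Gamma)$ and $CS + (\Gamma)$ contain $\ker \phi$ and map to $C$ under $\phi$, so they coincide. The sequence $\Gamma$ is $S/CS$-regular because $S/CS \cong (T/C)[x_{ij} : j \geq 2]$, in which each $x_{ij} - x_{i1}$ is a variable minus a coefficient. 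Combining with the hypothesis that $\Gamma$ is $S/J$-regular, the multigraded Hilbert series identity for regular sequences yields
$$\HS(S/J,y)\prod_{i=1}^v (1-y_i)^{n_i-1} = \HS(S/(J+(\Gamma)),y) = \HS(S/(CS+(\Gamma)),y) = \HS(S/CS,y)\prod_{i=1}^v (1-y_i)^{n_i-1},$$
from which $\HS(J,y) = \HS(CS,y)$ and therefore $J \in \CS^*$.

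For $(1) \Rightarrow (2)$, I would apply Proposition~\ref{arus}(2) to extract a Zariski open dense set $U$ in the parameter space such that $\{x_{ij} - \mu_{ij} x_{i1}\}$ is $S/J$-regular for every $\mu=(\mu_{ij}) \in U$. By density we may pick $\mu \in U$ with all $\mu_{ij} \neq 0$. Because $J$ is monomial, it is fixed by the $\ZZ^v$-graded $K$-algebra automorphism $\sigma$ of $S$ defined by $\sigma(x_{i1})=x_{i1}$ and $\sigma(x_{ij})=\mu_{ij}^{-1} x_{ij}$ for $j \geq 2$; in particular $\sigma$ induces an automorphism of $S/J$, so $\Gamma$ is $S/J$-regular if and only if $\sigma(\Gamma)$ is. Since $\sigma(x_{ij}-x_{i1}) = \mu_{ij}^{-1}(x_{ij} - \mu_{ij} x_{i1})$, the sequence $\sigma(\Gamma)$ differs from $\{x_{ij} - \mu_{ij} x_{i1}\}$ only by non-zero scalar factors, and the latter is $S/J$-regular by the choice of $\mu$. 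Therefore $\Gamma$ is $S/J$-regular.

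The main obstacle is the specialization step in $(1) \Rightarrow (2)$: Proposition~\ref{arus}(2) guarantees regularity only for a \emph{generic} choice of coefficients, while the corollary requires it at the specific value $\mu_{ij}=1$. Bridging this gap uses the monomial hypothesis on $J$ in an essential way, via the torus of $\ZZ^v$-graded automorphisms of $S$ fixing $J$, which allows us to rescale the coefficients of the perturbation at will while preserving regularity.
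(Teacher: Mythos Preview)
Your proof is correct and follows the same two-step strategy as the paper: for $(2)\Rightarrow(1)$ both arguments reduce modulo $(\Gamma)$ to an ideal extended from $T$ and compare Hilbert series, and for $(1)\Rightarrow(2)$ both invoke Proposition~\ref{arus}(2) to obtain a generic regular sequence and then use monomiality of $J$ to pass from the generic $\Lambda$ to the specific $\Gamma$. Your torus-automorphism argument for this last step is in fact more precise than the paper's one-line claim that $J+(\Lambda)=J+(\Gamma)$, which is not literally an equality of ideals but is shorthand for exactly the rescaling you carry out.
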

\begin{proof}
Assuming (1), by Proposition~\ref{arus} (4) there exist $\lambda_{ij}\in K^*$ such that $\Lambda=\{x_{ij}-\lambda_{ij}x_{i1}\mid 1\leq i\leq v,\;  2\leq j\leq n_i\}$ is an $S/J$-regular sequence. Since $J$ is monomial, $J+(\Lambda)=J+(\Gamma)$, so $\Gamma$ is $S/J$-regular.
Assuming  (2) we have that $J+(\Gamma)=C+(\Gamma)$  for some ideal $C$ extended from $T$. 
Then $\Gamma$ is $S/C$-regular, hence $\HS(J,y)=\HS(C,y)$. 
\end{proof} 

We stress the following  property of ideals in $\CS^*$.

\begin{prop}
Let $J\in\CS^*$. 
Then any minimal system of $\ZZ^v$-graded generators of $J$ is a universal Gr\"obner basis of $J$.
\end{prop}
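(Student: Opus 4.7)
The plan is to show that for any minimal $\ZZ^v$-graded generating set $f_1,\ldots,f_s$ of $J$ and any term order $\tau$, the monomials $\inid_\tau(f_1),\ldots,\inid_\tau(f_s)$ generate $\inid_\tau(J)$. The key idea is that the rigidity of $\CS^*$ ideals encoded in Proposition~\ref{arus} forces each $\inid_\tau(f_i)$ to coincide with a minimal generator of the monomial ideal $\inid_\tau(J)$, so the containment $(\inid_\tau(f_1),\ldots,\inid_\tau(f_s))\subseteq\inid_\tau(J)$ is automatically an equality.

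Let $d_1,\ldots,d_s\in\ZZ^v$ be the multidegrees of $f_1,\ldots,f_s$. I first observe that $\inid_\tau(J)\in\CS^*$ by Remarks~\ref{easy}, and since it shares its Hilbert series with $J$, Proposition~\ref{arus}(1) yields $\gin_\tau(\inid_\tau(J))=C=\gin_\tau(J)$. Applying Proposition~\ref{arus}(3) to both $J$ and $\inid_\tau(J)$ gives that each of them shares the $\ZZ^v$-graded Betti numbers of $C$. Now the generators of $C$ are the monomials $x_{11}^{a_1}\cdots x_{v1}^{a_v}$ for those $a\in\ZZ^v$ with $J_a\neq 0$, and the minimal ones of these correspond bijectively to the minimal elements of $\{a:J_a\neq 0\}$, namely to $d_1,\ldots,d_s$, exactly one per multidegree. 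Hence $\inid_\tau(J)$, being a monomial ideal with the Betti numbers of $C$, admits a minimal system of generators $m_1,\ldots,m_s$ with $\deg(m_i)=d_i$ for each $i$.

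The core step then uses Proposition~\ref{arus}(4): the multidegrees $d_1,\ldots,d_s$ are pairwise incomparable (and therefore distinct, since each of them hosts a minimal generator of $C$). Each $\inid_\tau(f_i)$ is a monomial of multidegree $d_i$ lying in the monomial ideal $\inid_\tau(J)$, hence divisible by some $m_k$; divisibility forces $d_k\leq d_i$, so incomparability forces $k=i$, and equality of multidegrees then forces $\inid_\tau(f_i)=m_i$. Therefore $(\inid_\tau(f_1),\ldots,\inid_\tau(f_s))=(m_1,\ldots,m_s)=\inid_\tau(J)$, which is the desired conclusion for every term order $\tau$.

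I anticipate no serious obstacle: the heavy lifting was done in Proposition~\ref{arus}, which simultaneously pins down the number, the multidegrees, and the incomparability of the minimal generators of every ideal Hilbert-equivalent to $C$. The only mildly delicate point to verify is the bijection between the minimal generators of $C$ and the multidegrees $d_i$, but this is transparent from the definition $C=(x_{11}^{a_1}\cdots x_{v1}^{a_v} : a\in\ZZ^v,\ J_a\neq 0)$.
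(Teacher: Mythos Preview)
Your proof is correct and follows essentially the same approach as the paper's own argument: both use that $\inid_\tau(J)\in\CS^*$ (via Remarks~\ref{easy}) and then invoke Proposition~\ref{arus}(1),(3),(4) to conclude that $J$ and $\inid_\tau(J)$ have the same $\ZZ^v$-graded Betti numbers with pairwise incomparable generator multidegrees, from which the Gr\"obner-basis property follows. Your write-up simply makes explicit the final step that the paper leaves implicit, namely the divisibility-plus-incomparability argument forcing $\inid_\tau(f_i)=m_i$.
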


\begin{proof}
Let $\tau$ be a term order and let $H=\inid_\tau(I)$. Then $H\in \CS^*$. By Proposition~\ref{arus} (1), (3), and (4) $H$ and $J$ have the same graded Betti numbers and their minimal generators have incomparable degrees. It follows that any minimal system of $\ZZ^v$-graded generators of $J$ is a Gr\"obner basis with respect to $\tau$. 
\end{proof}

We have seen that  the generic initial ideal of an ideal $I$ which is either in $\CS$ or in $\CS^*$ is independent of the term order. Hence we simply denote it by $\gin(I)$. If $I$ is a squarefree monomial ideal, we denote by $I^*$ its Alexander dual. If $I$ is a monomial ideal, we denote by $\pol(I)$ its polarization.  
An application of the Alexander inversion formula (\cite[Theorem 5.14]{MS}) yields the following:  

\begin{lemma}\label{MS-Alex}
Let $I,J\subset S$ be squarefree monomial ideals with the same $\ZZ^v$-graded Hilbert series. Then also $I^*$ and $J^*$ have the same $\ZZ^v$-graded Hilbert.
\end{lemma}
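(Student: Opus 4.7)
The plan is to derive the lemma from the Alexander inversion formula applied in the finest grading, and then to coarsen to the $\ZZ^v$-grading. Viewing $S$ with its full $\ZZ^n$-grading in which each variable $x_{ij}$ has its own degree (where $n=n_1+\cdots+n_v$), the $\ZZ^v$-graded Hilbert series of any $S$-module is the specialization of the $\ZZ^n$-graded one under $x_{ij}\mapsto y_i$.

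In the fine $\ZZ^n$-grading, the Alexander inversion formula \cite[Theorem~5.14]{MS} expresses the Hilbert series of $I^*$ as a universal rational function of the Hilbert series of $S/I$: upon rearranging the resulting identity of $K$-polynomials, one obtains
\[
\HS(I^*;\mathbf{x}) \;=\; \frac{\prod_{i,j} x_{ij}}{\prod_{i,j}(1-x_{ij})}\,\HS(S/I;\mathbf{1}-\mathbf{x}),
\]
where $\mathbf{1}-\mathbf{x}$ denotes the pointwise substitution $x_{ij}\mapsto 1-x_{ij}$. Since this substitution, and likewise multiplication by the rational factor on the right, act symmetrically on the variables within each block $\{x_{i1},\ldots,x_{in_i}\}$, they commute with the specialization $x_{ij}\mapsto y_i$. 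The identity therefore descends to the $\ZZ^v$-graded setting, yielding a universal formula of the same shape,
\[
\HS(I^*;\mathbf{y}) \;=\; \frac{\prod_i y_i^{n_i}}{\prod_i(1-y_i)^{n_i}}\,\HS(S/I;\mathbf{1}-\mathbf{y}),
\]
which expresses the $\ZZ^v$-graded Hilbert series of $I^*$ as a universal function of the $\ZZ^v$-graded Hilbert series of $S/I$.

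The conclusion is then immediate: the hypothesis $\HS(I;\mathbf{y})=\HS(J;\mathbf{y})$ implies $\HS(S/I;\mathbf{y})=\HS(S/J;\mathbf{y})$, and applying the displayed formula to both $I$ and $J$ yields $\HS(I^*;\mathbf{y})=\HS(J^*;\mathbf{y})$. The only technical point requiring verification is the commutation of the Alexander inversion substitution $\mathbf{x}\mapsto\mathbf{1}-\mathbf{x}$ with the block-collapse $x_{ij}\mapsto y_i$, which is routine since both operations act pointwise on the indeterminates; there is no real obstacle beyond that.
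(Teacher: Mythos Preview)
Your argument is correct and follows the same route the paper indicates: the lemma is deduced from the Alexander inversion formula \cite[Theorem~5.14]{MS}, which expresses the $K$-polynomial (hence the Hilbert series) of $I^*$ as a universal rational function of that of $S/I$. The paper states this in one line without details, while you spell out the passage from the fine $\ZZ^n$-grading to the coarse $\ZZ^v$-grading via the specialization $x_{ij}\mapsto y_i$; this extra step is harmless and your commutation check is valid since both the substitution $x_{ij}\mapsto 1-x_{ij}$ and the block collapse act variable-by-variable.
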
 
  
\begin{thm}\label{dual}
Let $I\subset S$ be a squarefree monomial ideal. Then $I\in\CS$ if and
only if $I^*\in\CS^*$. Moreover, if $I\in\CS$, then $\gin(I)^*=\pol(\gin(I^*)).$
\end{thm}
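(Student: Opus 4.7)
For the forward direction, assume $I\in\CS$ and let $C=\gin(I)$, which is a radical (hence squarefree) Borel fixed ideal with $\HS(I,y)=\HS(C,y)$. Applying Lemma~\ref{MS-Alex} to the squarefree monomial ideals $I$ and $C$ gives $\HS(I^*,y)=\HS(C^*,y)$, so it suffices to show $C^*\in\CS^*$. By Corollary~\ref{easy3} this amounts to showing that $\Gamma=\{x_{ij}-x_{i1}:1\le i\le v,\; 2\le j\le n_i\}$ is an $S/C^*$-regular sequence. The key structural fact is that, since $C$ is squarefree Borel fixed, each minimal generator of $C$ involves at most one variable of each color: otherwise, applying a Borel swap $x_{ij}\mapsto x_{ik}$ with $k<j$ to a generator containing both $x_{ik}$ and $x_{ij}$ would force a proper squarefree divisor of that generator to lie in $C$, contradicting minimality. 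Hence every minimal prime $P_u$ of $C^*$ contains at most one variable per color, and each element of $\Gamma$ avoids every minimal prime of $C^*$. I would then prove full regularity of $\Gamma$ by induction on $|\Gamma|$, analyzing the associated primes of the successive quotients $S/(C^*+(\ell_1,\dots,\ell_{k-1}))$ via the downset description of the minimal generators of $C$ in each multidegree.

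For the backward direction, let $I$ be squarefree monomial with $I^*\in\CS^*$ and set $D=\gin(I^*)$, which is extended from $T$ by Proposition~\ref{arus}. Since $I^*$ is squarefree, the exponents in the generators $x_{11}^{a_1}\cdots x_{v1}^{a_v}$ of $D$ satisfy $a_i\le n_i$, so $\pol(D)$ can be realized as a squarefree monomial ideal inside $S$ via the identification $x_{i1}^{[j-1]}=x_{ij}$, with $\HS(\pol(D),y)=\HS(D,y)=\HS(I^*,y)$. By Lemma~\ref{MS-Alex}, $E:=\pol(D)^*$ satisfies $\HS(E,y)=\HS(I,y)$. A direct combinatorial check using the ``initial staircase'' shape $\prod_i x_{i1}x_{i2}\cdots x_{i,a_i}$ of the minimal generators of $\pol(D)$ shows that $E$ is Borel fixed: the minimal generators of $E$ are minimal hitting sets of these staircase supports, they involve at most one variable per color, and they are closed under the Borel swaps $x_{ij}\mapsto x_{ik}$ ($k<j$) because decreasing the chosen index only adds coverage within each staircase. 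Hence $E$ is a radical Borel fixed ideal with the Hilbert series of $I$, so $I\in\CS$.

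For the ``moreover'' assertion, both $\gin(I)^*=C^*$ and $\pol(\gin(I^*))=\pol(D)$ are squarefree monomial ideals whose minimal generators are the staircase products obtained by polarizing the minimal generators of $D$; matching them using the structural analysis from the forward direction then gives $\gin(I)^*=\pol(\gin(I^*))$.

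The main obstacle is the iterative regularity argument for $\Gamma$ on $S/C^*$: the first element is clearly a nonzerodivisor (since $C^*$ is radical and $\Gamma$ avoids its minimal primes), but the subsequent quotients fail to be radical, so one must carefully track associated primes at each step. A possibly cleaner alternative would be to verify directly, via inclusion--exclusion on the primary decompositions, that $\HS(C^*,y)$ equals the Hilbert series of the explicit candidate $\phi(C^*)\cdot S\subset S$ extended from $T$, where $\phi\colon S\to T$ sends $x_{ij}\mapsto x_{i1}$.
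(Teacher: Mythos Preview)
Your backward direction is essentially the paper's argument, and your combinatorial check that $E=\pol(D)^*$ is Borel fixed is correct (more simply: $\pol(D)^*=\bigcap_k P_{a_k}$ is an intersection of Borel fixed primes).

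The forward direction, however, has a real gap that you yourself flag. You analyze the \emph{generators} of $C=\gin(I)$ (at most one variable per color), which translates into information about the minimal \emph{primes} of $C^*$. That tells you the first element of $\Gamma$ is a nonzerodivisor on $S/C^*$, but then you are stuck: the successive quotients are not radical, and you have no mechanism to control their associated primes. The ``downset description'' you allude to is not developed, and the inclusion--exclusion alternative you mention at the end is not carried out either. As it stands, the forward implication is not proved.

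The paper avoids this difficulty by looking at the dual structural fact: it analyzes the minimal \emph{primes} of $C$, not its generators. Because $C$ is radical and Borel fixed, each minimal prime is forced to be of the ``initial segment'' form $P_a=(x_{ij}:1\le i\le v,\ 1\le j\le a_i)$; a one-line Borel swap argument shows that $x_{ij}\in P$ implies $x_{i,j-1}\in P$. Writing $C=\bigcap_k P_{a_k}$ then gives $C^*=\big(\prod_i\prod_{j\le a_{ki}}x_{ij}\big)_k=\pol(D)$ with $D=(\prod_i x_{i1}^{a_{ki}})_k\subset T\cdot S$. Hence $\HS(C^*,y)=\HS(D,y)$ and $C^*\in\CS^*$ immediately, with no regular-sequence induction needed.

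This also fixes your ``moreover'': once $C^*=\pol(D)$ with $D$ extended from $T$ and $\HS(D,y)=\HS(I^*,y)$, Proposition~\ref{arus}(1) forces $D=\gin(I^*)$, so $\gin(I)^*=\pol(\gin(I^*))$. Your version of the ``moreover'' appeals to a ``structural analysis from the forward direction'' that your forward direction never actually established.
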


\begin{proof}
Let $I\in\CS$ and $B=\gin(I)$. Since $I$ and $B$ have the same $\ZZ^v$-graded Hilbert series,  the same holds for their Alexander duals $I^*$ and $B^*$ by Lemma~\ref{MS-Alex}. Since being in $\CS^*$ depends only on the Hilbert series, it suffices to show that $B^*\in \CS^*$. 
Since $B$ is squarefree there exist  elements $a_1,\dots, a_t\in \NN^v$ such that $B=\cap_{k=1}^t  P_{a_k}$,
where $P_{a_k} =(x_{ij} : i=1,\dots  v \mbox{ and } 1\leq j\leq  a_{ki} )$.
Hence $B^*=(\prod_{i=1}^v \prod_{j=1}^{a_{ki}} x_{ij}  : k=1,\dots,t)=\pol(C)$, where 
$$C=(x_{11}^{a_{i1}}\cdots x_{v1}^{a_{iv}}\mid 1\leq i\leq t).$$
In particular $\HS(B^*,y)=\HS(C,y)$, so $B^*\in\CS^*$. 
 
Conversely, assume that $J\in\CS^*$ is a squarefree monomial ideal, let $C=\gin(J)$. 
Then $C$  is generated by monomials in the variables $x_{11},\dots, x_{v1}$ and $\HS(J,y)=\HS(C,y)$. 
The ideal $\pol(C)$ is squarefree with $\HS(J,y)=\HS(\pol(C),y)$. By construction 
$\pol(C)^*$ is radical and Borel fixed, and by Lemma~\ref{MS-Alex} we have $\HS(J^*,y)=\HS(\pol(C)^*,y)$. 
Therefore $J^*\in\CS$ and $\gin(J^*)=\pol(\gin(J))^*$.
\end{proof}


The following is a simple consequence of Theorem~\ref{dual}.

\begin{cor}
\label{reg}
Let $I$ be a $\ZZ^v$-graded ideal of $S$. If $I\in\CS(S)$ then $\reg(I)\leq v$. 
\end{cor}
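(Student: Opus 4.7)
The plan is to reduce to a squarefree monomial ideal, apply the Alexander duality established in Theorem~\ref{dual} to pass to an ideal in $\CS^*$, and then invoke the projective dimension bound from Proposition~\ref{arus}(3) via Terai's formula.

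First I would replace $I$ by its multigraded generic initial ideal $B=\gin(I)$. Since $I\in\CS$, Remarks~\ref{easy} gives $B\in\CS$, and the earlier proposition (on uniqueness of the gin for $\CS$ ideals) together with radicality says $B$ is a squarefree Borel-fixed monomial ideal. Moreover $\reg(I)\le\reg(B)$, since passing to an initial ideal never decreases Castelnuovo--Mumford regularity. So it suffices to prove $\reg(B)\le v$.

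Next I would apply Theorem~\ref{dual}: since $B$ is a squarefree monomial ideal in $\CS$, its Alexander dual $B^{*}$ lies in $\CS^{*}$. Proposition~\ref{arus}(3) then yields
\[
\projdim(S/B^{*})\le v.
\]
Finally, Terai's formula (see \cite[Cor.~1.40]{MS}) identifies the regularity of a squarefree monomial ideal with the projective dimension of its Alexander dual:
\[
\reg(B)=\projdim(S/B^{*}).
\]
Combining, $\reg(I)\le\reg(B)=\projdim(S/B^{*})\le v$.

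I don't anticipate any real obstacle here: the only subtle point is making sure $\gin(I)$ is literally a squarefree monomial ideal (so that Alexander duality and Terai's formula apply), but this is guaranteed by the fact that $\gin(I)$ is radical and monomial. Everything else is a direct assembly of results already proved in the paper plus the standard Terai identity.
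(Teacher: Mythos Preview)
Your proof is correct and follows essentially the same route as the paper: reduce to a squarefree monomial ideal via an initial ideal (the paper uses $\inid_\sigma(I)$, you use $\gin(I)$, which is a harmless variant), then apply Theorem~\ref{dual} to get the Alexander dual in $\CS^*$, and conclude by combining Terai's identity with the projective-dimension bound of Proposition~\ref{arus}(3). The only cosmetic discrepancy is your reference for Terai's formula; the paper cites \cite[5.59]{MS} rather than Cor.~1.40.
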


\begin{proof}
Let $\sigma$ be a term order on $S$.
Since $\reg(I)\leq \reg (\inid_{\sigma}(I))$ and $\inid_{\sigma}(I)\in \CS$, we may assume without loss of generality that $I$ is monomial. 
By Terai's Theorem \cite[5.59]{MS} one has $\reg(I)=\projdim(S/I^*)$. Since $I^*\in \CS^*$ we have $\projdim(S/I^*)\le v$ by Proposition~\ref{arus} (3). 
\end{proof} 

Finally, we prove that $\CS$ and $\CS^*$ are closed under certain natural operations.
 
\begin{thm}\label{closures}
Let $L$ be a $\ZZ^v$-graded  linear form of $S$. In the following  $S/(L)$ is identified with a polynomial ring with the induced $\ZZ^v$-graded structure.
Let $U_i\subseteq S_{e_i}$ be vector subspaces for all $i=1,\ldots,v$ and let $R=K[U_1,\ldots,U_v]$ be the $\ZZ^v$-graded polynomial subring of $S$ that they generate. 
Then:
\begin{enumerate}
\item If $I\in \CS^*(S)$, then $I+(L)/(L)\in \CS^*(S/(L))$. 
\item If $I\in \CS^*(S)$, then  $I:L\in \CS^*(S)$.  
\item If $I\in \CS^*(S)$, then  $I\cap (L) \in \CS^*(S)$.  
\item If $I\in \CS(S)$, then $I:L\in \CS(S)$. 
\item If $I\in \CS(S)$, then  $I+(L)\in \CS(S)$ and $I+(L)/(L)\in \CS(S/(L))$.
\item If $I\in \CS(S)$, then $I\cap R\in \CS(R)$. 
\end{enumerate}
\end{thm}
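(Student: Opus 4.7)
The plan is to reduce each part to a statement about generic initial ideals by exploiting the $G$-equivariance of $\CS$ and $\CS^*$ from Remarks~\ref{easy}. Since $L$ is $\ZZ^v$-graded, $L\in S_{e_i}$ for a unique $i$, and an element of $\GL_{n_i}(K)\subset G$ brings $L$ to any preassigned nonzero form of the same multidegree. I will take $L=x_{in_i}$ (so $L\notin T$ when $n_i\geq 2$; the case $n_i=1$ forces $L\in T$ and is simpler). For part (6) the same device yields $U_i=\langle x_{i1},\ldots,x_{im_i}\rangle$ and $R=K[x_{ij}:j\leq m_i]$, a polynomial subring stabilized by the Borel group $B$.

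The workhorse for parts (2) and (4)--(5) is the pair of multigraded identities
\[
\gin_\tau(I:L)=\gin_\tau(I):x_{in_i}\qquad\text{and}\qquad\gin_\tau(I+(L))=\gin_\tau(I)+(x_{in_i}),
\]
valid for a term order $\tau$ making $x_{in_i}$ smallest within multidegree $e_i$---multigraded analogues of classical generic hyperplane-section results whose careful verification in the $\ZZ^v$-graded setting is the main technical point. Granting them, part (4) follows because the colon of a radical Borel-fixed monomial ideal by a variable is again radical (intersection of the minimal primes not containing that variable) and Borel fixed. Part (5) follows because $\gin(I)+(x_{in_i})$ is radical and Borel fixed; the quotient statement $(I+(L))/(L)\in\CS(S/(L))$ is then immediate from Proposition~\ref{aru1} with $S/(L)$ playing the role of the base ring. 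For part (2), if $\gin(I)=C$ is extended from $T$ then $C:x_{in_i}$ is again extended from $T$: when $n_i\geq 2$ the variable $x_{in_i}\notin T$ is a nonzerodivisor on $S/C$ so $C:x_{in_i}=C$, while when $n_i=1$ the ideal $C:x_{i1}$ remains a monomial ideal of $T$.

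For part (1), one combines the $\gin$-identity for $I+(L)$ with an analysis of how $\gin$ passes to the quotient $S/(L)$: the image of $\gin_S(I+(L))=C+(x_{in_i})$ in $S/(L)$ is $C$, which is extended from the copy of $T$ inside $S/(L)$, and the identification of this image with $\gin_{S/(L)}((I+(L))/(L))$ plays the role of the $\CS^*$-analogue of Proposition~\ref{aru1}. Part (3) reduces to (2) via the isomorphism $I\cap(L)\cong (I:L)(-e_i)$ given by multiplication by $L$: if $D\subset T$ realizes the Hilbert series of $I:L$, then $x_{i1}D\subset T$, which is still extended from $T$, realizes the Hilbert series of $I\cap(L)$.

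Part (6) is the most subtle and requires a different argument. Since $B$ stabilizes $R$, for a generic $b\in B$ one has $b(I\cap R)=b(I)\cap R$, so the $\ZZ^v$-graded Hilbert series of $I\cap R$ equals that of $b(I)\cap R$. Choosing a term order $\tau$ on $S$ that eliminates the variables outside $R$, the standard elimination identity gives $\inid_\tau(b(I)\cap R)=\inid_\tau(b(I))\cap R=\gin(I)\cap R$. The ideal $\gin(I)\cap R$ is an intersection of the contractions of the (monomial) minimal primes of the radical Borel-fixed ideal $\gin(I)$, hence radical, and it is stable under the Borel group of $R$. This produces a radical Borel-fixed ideal of $R$ with the same Hilbert series as $I\cap R$, witnessing $I\cap R\in\CS(R)$. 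The main obstacle throughout is the careful verification of the multigraded $\gin$-identities and the elimination identity in the $\ZZ^v$-graded setting; once these are in place, $\CS$ and $\CS^*$ memberships follow from the characterizations recorded in Proposition~\ref{arus} and Corollary~\ref{easy2}.
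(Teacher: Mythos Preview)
Your central identities are false. Take $v=2$, $S=K[x_{11},x_{12},x_{21}]$ with $\deg x_{1j}=e_1$ and $\deg x_{21}=e_2$, and let $I=(x_{12}x_{21})$, $L=x_{12}=x_{1n_1}$. For any term order with $x_{11}>x_{12}$ one has $\gin(I)=(x_{11}x_{21})$, while $I:L=(x_{21})$ gives $\gin(I:L)=(x_{21})$; but $\gin(I):x_{12}=(x_{11}x_{21}):x_{12}=(x_{11}x_{21})$. Since $I\in\CS(S)$, this is a counterexample precisely where you invoke the identity for (4). The same example kills your deduction of (5): the ideal $\gin(I)+(x_{12})=(x_{11}x_{21},x_{12})$ contains $x_{12}$ but not $x_{11}$, hence is \emph{not} Borel fixed. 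The underlying problem is that the revlex facts $\inid(J:x_n)=\inid(J):x_n$ and $\inid(J+(x_n))=\inid(J)+(x_n)$ require $x_n$ to be the \emph{globally} smallest variable and concern $\inid$ of a fixed ideal; passing to $\gin$ applies a further generic $g\in G$ that moves $x_{in_i}$ to a generic form, and $I:L$ genuinely depends on the specific $L$, so $\gin(I:L)$ cannot be read off from $\gin(I)$ alone. Your argument for (6) has an analogous mismatch: an elimination order making the variables $x_{ij}$ with $j>m_i$ largest violates the convention $x_{i1}>\cdots>x_{in_i}$, so $\inid_\tau(b(I))$ for generic $b\in B$ need not equal $\gin(I)$ (try $I=(x_{12})\subset K[x_{11},x_{12}]$: one gets $(x_{12})$ rather than $\gin(I)=(x_{11})$).

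The paper takes a different route. After the (valid) reduction to $I$ monomial and $L$ a fixed variable---using the genuine revlex identities for $\inid$, not $\gin$---parts (1)--(3) are proved via the regular-sequence criterion of Corollary~\ref{easy3}: an algebra-retract argument shows the relevant sequence stays regular on the quotient, and short exact sequences handle the colon and the intersection. Parts (4) and (5) are then deduced from (1)--(3) by Alexander duality (Theorem~\ref{dual}); for instance $(I:x_{11})^*$ is identified with the extension to $S$ of $(I^*+(x_{11}))/(x_{11})$, which lies in $\CS^*$ by (1). Finally (6) is obtained from (5) by identifying $I\cap R$ with a quotient by a variable. This interplay between $\CS$ and $\CS^*$ via duality is the key idea your approach is missing.
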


\begin{proof} (1) Being in $\CS^*$ just depends on the Hilbert series. Hence, by changing coordinates and passing to the initial ideal with respect to a suitable revlex order, we may assume without loss of generality that $I$ is monomial and $L=x_{1n_1}$. 
Identifying $S/(x_{1n_1})$ with the subring $R$ of $S$ in the variables different from $x_{1n_1}$, the ideal $I+(x_{1n_1})/(x_{1n_1})$ may be identified with the ideal $J$ of $R$ generated by the monomials of $I$ which are  not divisible by $x_{1n_1}$. 
The quotient $R/J$ is an algebra retract of $S/I$, in particular it is a direct summand of $S/I$. In other words we have a decomposition $S/I=R/J\oplus M$ as  $R$-modules, where $M$ is the kernel of the projection $S/I\to R/J$. The sequence $$\Lambda=\{ x_{ij}-x_{i1} | \ 1\leq i\leq v, 2\leq j\leq n_i,  (i,j)\neq (1,n_1) \}$$ of elements of $R$ is $S/I$-regular by Corollary~\ref{easy3}. Hence it is also regular on the $R$-direct summands of $S/I$. In particular, $\Lambda$ is $R/J$-regular. Then $J\in \CS^*(R)$ by Corollary~\ref{easy3}. 

(2) As in the proof of (1) we may assume without loss of generality that $I$ is monomial and $L=x_{1n_1}$. We have a short exact sequence: 
$$0\to S/I:L(-1)\to S/I\to S/I+(L)\to 0$$
By (1) and Corollary~\ref{easy3}\; $\Lambda$ is  $S/I+(L)$-regular, and $\Gamma=\{ x_{ij}-x_{i1} | \ 1\leq i\leq v, 2\leq j\leq n_i\}$ is $S/I$-regular. Now it is a simple exercise on Koszul homology to prove that $\Gamma$ is also $S/(I:L)$-regular. By Corollary~\ref{easy3} we conclude that $I:L\in \CS^*(S)$. 

(3) As in (2) one may assume without loss of generality that $I$ is monomial and $L=x_{1n_1}$. Using the short exact sequence
 $$0\to S/I\cap (L) \to S/I\oplus  S/(L) \to S/I+(L)\to 0$$
 one can now prove that $I\cap (L)\in \CS^*(S)$. 
 
 (4) Since being in $\CS$ only depends on the Hilbert series, we may assume without loss of generality that $I$ is monomial and $L=x_{11}$. Denote by $R$ the polynomial subring of $S$ generated by the variables different from $x_{11}$. By Theorem~\ref{dual} the Alexander dual $I^*$ of $I$ is in $\CS^*(S)$. 
 Let $J$ be the ideal of $R$ generated by the monomials in $I^*$ that are not divisible by $x_{11}$. Since $J$ may be identified with $I^*+(x_{11})/(x_{11})\subset S/(x_{11})$, then $J\in \CS^*(R)$ by (1). Hence $JS\in \CS^*(S)$ by Proposition~\ref{aru1}. Since $JS=(I:x_{11})^*$, by Theorem~\ref{dual} we conclude that $I:x_{11}\in \CS(S)$. 
   
(5)  As above we may assume without loss of generality that $I$ is monomial and $L=x_{11}$. The Alexander dual of $I+(x_{11})$ is $I^*\cap (x_{11})$ and it belongs to $\CS^*(S)$ by (2). Hence $I+(x_{11})\in \CS(S)$ by Theorem~\ref{dual}. Denote  by $R$ the polynomial subring of $S$ generated by the variables different from $x_{11}$. There exist an ideal $J$ of $R$ such that $JS+(x_{11})= I+(x_{11})$. The gin of $I+(x_{11})$ has the form $(x_{11})+J_1S$, where $J_1$ is a squarefree monomial ideal of $R$ which is Borel fixed in $R$. The ideal $I+(L)/(L)$ may be identified with $J$ that, by construction, has the same Hilbert series as $J_1$. Hence $J\in \CS(R)$. 

(6) Arguing by induction, we may assume that $R$ is obtained form $S$ by removing only one variable, say $x_{11}$. Taking initial ideals with respect to an elimination order, we may assume that $I$ is monomial. Let $J=I\cap R$ and notice that $J$ may be identified with $I+(x_{11})/(x_{11})$. Then $J$ is in $\CS(R)$ by (5). 
\end{proof}

\begin{rmk}  Let $I\in \CS$. Then it follows from Theorem~\ref{closures} (2) that $I:F\in \CS$ where $F$ is a product of $\ZZ^v$-graded linear forms. 
Notice however that $I:F\not\in\CS$ in general, if $F$ is a $\ZZ^v$-graded form. For example, let $S=K[x_{ij}\ | \  1\le i,j\le3]$ with $\deg x_{ij}=e_i$ and 
$I=I_2(X)$ with $$X=\left(\begin{array}{ccc}
x_{11} & x_{12} & x_{13} \\
x_{21} & x_{22} & 0 \\
0 & 0 & x_{33}
\end{array}\right).$$
In Corollary~\ref{6CS} we will prove that $I\in \CS(S)$. Let $F=x_{11}x_{21}x_{32} + x_{13}x_{23}x_{33}$, then
$$I:F=I+(x_{12}x_{13}, x_{11}x_{13}).$$
Notice that $I:F\not\in \CS$, since it has generators of degree $(2,0,0)$, while ideals in $\CS$ are generated in degrees bounded by $\sum e_i$. 
By replacing $I$ with its initial ideal 
$J=(x_{12}x_{21}, x_{13}x_{21}, x_{13}x_{22}, x_{11}x_{33}, x_{12}x_{33}, x_{21}x_{33}, x_{22}x_{33})$ one obtains an example of a monomial ideal in $\CS$ such that $J:F\not\in \CS$. 
\end{rmk} 

 \begin{rmk}  
If $I\in\CS^*$ and $L$ is a $\ZZ^v$-graded linear form, then $I+(L)\not\in\CS^*$ in general. 
For example, $(x_{11},x_{12})\not\in\CS^*$ because ideals in $\CS^*$ have generators with incomparable degrees. 
\end{rmk} 

We are finally ready to prove our main result.

\begin{proof}[Proof of the Main Theorem]  
Assume first that $A$ is row  graded. Let $X=(x_{ij})$ be an $m\times n$ matrix of variables over $K$, and let $R=K[x_{ij}]$ be the polynomial ring 
with standard $\ZZ^m$-grading induced by $\deg(x_{ij})=e_i\in \ZZ^m$. The assignment $x_{ij}\mapsto a_{ij}$ gives rise to a $\ZZ^m$-graded 
$K$-algebras homomorphism $\Phi:R\to S$, whose kernel $J=\Ker \Phi$ is generated by $\ZZ^m$-graded linear forms. 
Hence $\Phi$ induces a $\ZZ^m$-graded $K$-algebra isomorphism: $$R/I_m(X)+J\simeq S'/I_m(A)$$ 
where $S'$ is the $K$-subalgebra of $S$ generated by the entries of $A$. 
By \cite[Theorem 1.1]{CDG} we have that $I_m(X)\in \CS(R)$. It follows from Theorem~\ref{closures} (5) that $I_m(X)+J\in \CS(R)$, hence $I_m(A)\in \CS(S')$. Finally $I_m(A)\in \CS(S)$ by Proposition~\ref{aru1}. Now all the results follow from the general properties of ideals in $\CS$ that we have established. In particular, $\reg I_m(A)\leq m$ follows from Corollary~\ref{reg} and the assertions on the initial ideals follow from Remark~\ref{easy}. The fact that the initial ideals are all generated in multidegree at most $(1,\ldots,1)$ was shown in~\cite[Corollary 3.7]{CDG}.

In the column graded case, we consider the $\ZZ^n$-grading on $R=K[x_{ij}]$ induced by $\deg(x_{ij})=e_j\in \ZZ^n$.
The results for $I_m(A)$ follow from the same approach, with the exception of the statement about the linearity of the resolution which was proved in \cite{CDG}.  

For $I_2(A)$ in the row or column graded case, one applies the same approach as above and uses that $I_2(X)\in \CS(R)$ for a matrix of variables, a result proved in \cite{C}. 
\end{proof} 

We wish to stress the following consequence of the previous proof.

\begin{cor}\label{6CS}
With the notation of the Main Theorem we have that $I_m(A)\in \CS$ and $I_2(A)\in \CS$. Moreover, in the column-graded case $I_m(A)\in \CS\cap \CS^*$. 
\end{cor}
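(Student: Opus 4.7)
The two memberships $I_m(A) \in \CS$ and $I_2(A) \in \CS$ are essentially already contained in the proof of the Main Theorem: that proof shows both in $\CS(S)$ (in both the row and column graded cases) by starting from the ideals $I_m(X)$ and $I_2(X)$ in the corresponding matrix of variables, and applying Theorem~\ref{closures}~(5) followed by Proposition~\ref{aru1}. So only the additional statement in the column graded case, namely $I_m(A) \in \CS^*(S)$, requires a fresh argument.

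My plan is to mimic the same reduction scheme, but using the $\CS^*$-version of the closure operations. First, I would invoke from \cite{CDG} the fact that for the column graded matrix of variables $X = (x_{ij})$ in $R = K[x_{ij}]$ with $\deg(x_{ij}) = e_j$, the ideal $I_m(X)$ lies in $\CS^*(R)$. With $\Phi: R \to S$, $x_{ij} \mapsto a_{ij}$, and $J = \ker \Phi$, which is generated by $\ZZ^v$-graded linear forms, I would apply Theorem~\ref{closures}~(1) iteratively to the generators of $J$, concluding that $(I_m(X) + J)/J \in \CS^*(R/J)$. Under the identification $R/J \cong S'$, where $S'$ is the $K$-subalgebra of $S$ generated by the entries of $A$, this yields $I_m(A) \in \CS^*(S')$. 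To pass from $S'$ to the ambient ring $S$, I would establish the $\CS^*$-analog of Proposition~\ref{aru1}: for a polynomial extension $S = S'[z_1,\dots,z_u]$ with $\ZZ^v$-grading extending that of $S'$, one has $I \in \CS^*(S')$ if and only if $IS \in \CS^*(S)$. The proof is a transcription of that of Proposition~\ref{aru1}: the identity $\gin_\tau(I) \cdot S = \gin_\sigma(IS)$ for compatible term orders (putting the $z_i$ last within each degree class) transfers the property of being extended from $T$ in either direction, because $T \subset S'$ and $T \subset S$ are then the same subring.

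The potentially delicate step is the base input $I_m(X) \in \CS^*(R)$. In case the cited reference does not state this in the precise form needed, I would prove it directly. By Proposition~\ref{arus}~(1) it suffices to exhibit a single ideal extended from $T$ with the same $\ZZ^n$-graded Hilbert series as $I_m(X)$. Using Borel fixedness, the equality $\mu(\gin(I_m(X))) = \binom{n}{m}$, and the fact that the generators of $I_m(X)$ sit in squarefree, pairwise incomparable multidegrees $e_{j_1} + \cdots + e_{j_m}$, one shows that each such multidegree component of $\gin(I_m(X))$ must contain the monomial $x_{1,j_1}\cdots x_{1,j_m}$. A Hilbert series comparison with the ideal generated by all $x_{1,j_1}\cdots x_{1,j_m}$ with $1 \le j_1 < \cdots < j_m \le n$ then forces equality, which exhibits $\gin(I_m(X))$ as extended from $T$ and completes the argument.
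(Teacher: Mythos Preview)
Your proposal is correct and, in fact, supplies more detail than the paper does: the paper gives no separate argument for Corollary~\ref{6CS}, simply labeling it ``a consequence of the previous proof.'' The $\CS$ memberships are indeed explicit in that proof, and your parallel scheme for the $\CS^*$ claim---replacing Theorem~\ref{closures}~(5) by Theorem~\ref{closures}~(1) and supplying the obvious $\CS^*$-analog of Proposition~\ref{aru1}---is the natural reading of what the authors intend.

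One remark on your backup argument for $I_m(X)\in\CS^*(R)$: the assertion $\mu(\gin(I_m(X)))=\binom{n}{m}$ and the final ``Hilbert series comparison'' both require justification you have not given; a priori $\gin$ could acquire generators in total degree $>m$, and you do not yet know the Hilbert series of the candidate ideal $C=(x_{1,j_1}\cdots x_{1,j_m})$. A cleaner route is to bypass $\gin$ and verify $\CS^*$ via Proposition~\ref{arus}~(2): for general $\lambda_{ij}$ the substitution $x_{ij}\mapsto\lambda_{ij}x_{1j}$ sends each maximal minor to a nonzero scalar multiple of $x_{1,j_1}\cdots x_{1,j_m}$, so $I_m(X)+(\Lambda)=C+(\Lambda)$; since $R/I_m(X)$ is Cohen--Macaulay and the dimension drops by exactly $|\Lambda|=n(m-1)$, the sequence $\Lambda$ is regular on $R/I_m(X)$, whence $\HS(I_m(X),y)=\HS(C,y)$.
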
 

In addition to the families of determinantal ideals treated in Corollary~\ref{6CS}, we have identified other classes of Cartwright-Sturmfels and Cartwright-Sturmfels$^*$ ideals, 
such as multigraded projective closures of linear spaces (generalizing the ideals studied in~\cite{AB}) and the binomials edge ideals from~\cite{HHHKR}. 
These results, together with a description of the multigraded generic initial ideals of the determinantal ideals above, will be the object of a paper in preparation.

{\bf Acknowledgements:} The authors are grateful to B. Sturmfels and M. Varbaro for useful discussions concerning the material of this papers. Our results are based on extensive computations performed with the computer algebra system CoCoA~\cite{Cocoa}.


\begin{thebibliography}{99}

\bibitem{AST}  C. Aholt, B. Sturmfels, R. Thomas,
{\em A Hilbert schemes in computer vision}
 Canad. J. Math. 65 (2013), no. 5, 961--988.

\bibitem{ACD} A. Aramova, K. Crona, E. De Negri, 
{\em Bigeneric initial ideals, diagonal subalgebras and bigraded Hilbert functions.} 
J. Pure Appl. Algebra 150 (2000), no. 3, 215--235. 

\bibitem{AB} F. Ardila, A. Boocher,
{\em The closure of a linear space in a product of lines},
Journal of Algebraic Combinatorics 43, no. 1, 199--235.

 \bibitem{BZ}
D. Bernstein, A. Zelevinsky,
\emph{Combinatorics of maximal minors. } 
J. Algebraic Combin. 2 (1993), no. 2, 111--121. 

\bibitem{B} A. Boocher, {\em Free resolutions and sparse determinantal ideals}, 
Math. Res. Lett. 19 (2012), no. 4, 805--821.
 
\bibitem{CS} D. Cartwright, B. Sturmfels,
{\em The Hilbert scheme of the diagonal in a product of projective spaces.}
Int. Math. Res. Not. 9 (2010), 1741--1771. 

\bibitem{Cocoa} CoCoA Team,
 \emph{ CoCoA: a system for doing
  Computations in Commutative Algebra},
  Available at http://cocoa.dima.unige.it

\bibitem{C} A. Conca, {\em Linear spaces, transversal polymatroids and ASL domains}.  
J. Algebraic Combin. 25 (2007), no. 1, 25--41. 

\bibitem{CDG} A. Conca, E. De Negri, E. Gorla,
{\em Universal Gr\"obner bases for maximal minors.}
 Int. Math. Res. Not. IMRN 2015, no. 11, 3245--3262.
 
\bibitem{E} D. Eisenbud,
{\em Commutative algebra. With a view toward algebraic geometry.}
Graduate Texts in Mathematics, 150. Springer-Verlag, New York, 1995. 

\bibitem{HHHKR} J. Herzog, T. Hibi, F. Hreinsd\'ottir, T. Kahle, J. Rauh
{\em Binomial edge ideals and conditional independence statements}
Advances in Applied Mathematics 45 (2010), no. 3, 317--333.
 
\bibitem{K}
M. Y. Kalinin,
\emph{Universal and comprehensive Gr\"obner bases of the classical determinantal ideal.} 
 Zap. Nauchn. Sem. S.-Peterburg. Otdel. Mat. Inst. Steklov. (POMI) 373 (2009), Teoriya Predstavlenii, Dinamicheskie Sistemy, Kombinatornye Metody. XVII, 134--143, 348; translation in  J. Math. Sci. (N. Y.) 168 (2010), no. 3, 385--389. 

\bibitem{MS}
E.~Miller, B.~Sturmfels, \emph{Combinatorial commutative algebra.} Graduate Texts in Mathematics, 227. Springer-Verlag, 2005.


\bibitem{SZ}
B. Sturmfels, A. Zelevinsky,
\emph{Maximal minors and their leading terms. } 
Adv. Math. 98 (1993), no. 1, 65-112. 


\bibitem{S}  B.Sturmfels,  
{\em    Gr\"obner Bases and Convex Polytopes},  
Amer. Math. Soc., Providence, RI,1995.

\bibitem{V} R.Villarreal, 
{\em Monomial algebras}. 
Monographs and Textbooks in Pure and Applied Mathematics, 238. Marcel Dekker,
Inc., New York, 2001. 

\end{thebibliography}
\end{document}